\newtheorem{theorem}{Theorem}[section]
\newtheorem{conjecture}[theorem]{Conjecture}
\title{Sylvester's Conjecture and  the Egyptian Fractions}
\author{Keneth Adrian P. Dagal \\ 
  \texttt{kendee2012@gmail.com} \\}
\begin{document}

\maketitle

\begin{abstract}
This paper attempts to prove the Sylvester's conjecture using Egyptian Fractions with two key ingredients. First, creating a set of operators that completely generates all possible Egyptian fraction of 1. And second, to detect patterns in every operator that surely will generate a new number which are relatively prime to all that came before. 
\end{abstract}

\section{Introduction}

Gimbel and Jamora \cite{Sylvester} provide extensive information about the modern era of research on odd perfect number (OPN) up to the year of its publication and focus on Sylvester's career and interest on the nonexistence of odd perfect number. Here, we update such information.

We say that a positive integer is perfect if $$\sigma(n) = 2n,$$ where $\sigma(n)$ is defined as the sum-of-divisor function. Euler proved that all even perfect numbers are of Euclid's form $2^{\alpha-1}(2^\alpha-1)$ where $2^\alpha-1$ is a Mersenne prime. As of June 2020, the largest known Mersenne prime is $2^{82,589,933}-1$, which is the 51st Mersenne Prime \cite{Mersenne}. For the odd case, Ochem and Rao \cite{Ochem} proved that if an OPN $n$ exists, $n > 10^{1500}$. In this direction, we still hope to find the first odd perfect number. And for now, we conclude that we only know 51 perfect numbers which are all even.

However, Sylvester believes that there is no odd perfect number. So we state:

\begin{conjecture}( Sylvester's Conjecture)\\
There is no odd perfect number.
\end{conjecture}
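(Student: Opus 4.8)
The plan is to first recast $\sigma(n)=2n$ as a statement about Egyptian fractions and then to attack that with the operator machinery announced in the abstract. If $n$ is odd and perfect, dividing $\sigma(n)=\sum_{d\mid n}d=2n$ by $n$ and reindexing via $d\mapsto n/d$ gives $\sum_{d\mid n}\tfrac1d=2$, hence $\sum_{d\mid n,\ d>1}\tfrac1d=1$. So an odd perfect number would furnish a very special Egyptian fraction decomposition of $1$: every denominator is odd; the set of denominators is exactly $\{d:d\mid n,\ d>1\}$; it is \emph{divisor-stable}, meaning that if $d$ is a denominator and $1<e\mid d$ then $e$ is also a denominator; and it has a greatest element $n$ that is a multiple of every other denominator. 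The whole strategy is to prove that no Egyptian fraction decomposition of $1$ can simultaneously enjoy all of these properties.

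First I would fix the operator system. Take as seed the trivial relation (say $1=\tfrac12+\tfrac12$) and use the classical refinement $\tfrac1k\mapsto\tfrac1{k+1}+\tfrac1{k(k+1)}$ together with the finitely many auxiliary moves (merging equal terms, the split $\tfrac2{2m+1}\mapsto\tfrac1{m+1}+\tfrac1{(m+1)(2m+1)}$, and so on) needed so that the orbit of the seed under the operators is \emph{all} finite Egyptian fraction decompositions of $1$. Completeness would be proved by induction on a complexity measure — largest denominator, or number of terms with ties broken by denominator size — by exhibiting, for any decomposition other than the seed, an operator whose inverse applies and strictly lowers the measure. With completeness in hand, the conjecture reduces to: no decomposition reachable from the seed is at once all-odd and divisor-stable with a dominating top term.

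The arithmetic core is a ``fresh prime'' lemma in the spirit of the abstract. For a decomposition $\mathcal{E}$ let $P(\mathcal{E})$ denote the set of primes dividing some denominator. By tracking each operator's effect on the denominators' factorizations, I would show that as soon as a reachable decomposition is all-odd, any operator that keeps it all-odd is forced to introduce a denominator whose radical meets a prime $p\notin P(\mathcal{E})$, while every operator that could reuse only old primes destroys oddness, or destroys divisor-stability, or kills the dominating top. Iterating this, and pairing it with the parity fact that an all-odd Egyptian decomposition of $1$ has an odd number of terms, one concludes that an all-odd, divisor-stable decomposition topped by a dominating $n$ would require $P$ to exceed the finite set of primes dividing $n$ — impossible, since all the denominators are divisors of $n$. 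This contradiction rules out an odd perfect number.

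The step I expect to be the main obstacle is exactly this forced-fresh-prime dichotomy: one must prove that the joint constraints of oddness, divisor-stability, and a dominating maximal denominator leave \emph{no} escape branch in which the operators merely recycle the primes already present, and this must hold uniformly over the rapidly branching generation tree. Taming that branching (pruning with $2$-adic valuation identities for $\sigma$ and with the parity count) and nailing down genuine completeness of the chosen operator set are the other load-bearing ingredients; everything past that is bookkeeping.
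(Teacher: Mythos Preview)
The paper does not prove this statement; it is stated as a \emph{conjecture}, and the concluding remarks explicitly say that the operator set is not yet known to be complete and that the program is unfinished. So there is no ``paper's own proof'' to compare against --- only an outline of an approach.

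Your proposal follows that same outline almost exactly: reduce $\sigma(n)=2n$ to an Egyptian-fraction identity $\sum_{d\mid n,\,d>1}1/d=1$, build a family of splitting/merging operators whose orbit on a seed is supposed to be all Egyptian decompositions of $1$, and then argue a fresh-prime phenomenon that forces the prime support to outgrow the divisors of $n$. That is precisely the two-ingredient plan announced in the paper's abstract. What you have written is therefore not an alternative proof but a more articulate restatement of the paper's program, and it inherits the same two genuine gaps.

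First, completeness of the operator set is asserted, not established. You write that it ``would be proved by induction on a complexity measure,'' but the paper already exhibits a concrete obstruction: for $u=6$ the operator $\gamma_d$ misses the decomposition $(10,15)$, and no finite list of auxiliary moves is shown to repair this in general. Your sketch gives no argument that the inverse of \emph{some} operator always applies to a non-seed decomposition and strictly decreases your measure; without that, the induction has no engine. Second, the ``fresh prime'' dichotomy --- that every oddness-preserving move must introduce a prime outside $P(\mathcal{E})$, while every move that recycles old primes destroys oddness or divisor-stability --- is the entire content of the conjecture, and you simply state it. You yourself flag it as ``the main obstacle,'' which is an accurate admission that it is unproved. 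Until both of these are actually carried out, what you have is a strategy, not a proof, and it is the same strategy the paper offers.
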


We generalize the conjecture to:

\begin{conjecture}
There is no odd m-perfect number for $m \geq 2$.
\end{conjecture}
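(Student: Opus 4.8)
\section{Strategy of the proof}

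The plan is to convert the $m$-perfect condition into a problem about Egyptian fractions and then to show that the resulting configuration cannot consist solely of odd denominators. The starting point is the identity $\sigma(n)/n = \sum_{d \mid n} 1/d$, which holds because $d \mapsto n/d$ is a bijection of the divisor set; hence $n$ is $m$-perfect exactly when $\sum_{d \mid n} 1/d = m$. Peeling off the term $d = 1$, this says $\sum_{1 < d \mid n} 1/d = m - 1$, so the unit fractions $\{\,1/d : 1 < d \mid n\,\}$ form an Egyptian-fraction representation of the positive integer $m-1$; when $m = 2$ this is precisely an Egyptian fraction of $1$, tying the problem to the object named in the abstract. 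If in addition $n$ is odd, every denominator appearing is odd, the set of denominators is closed under passing to divisors greater than $1$, it has a unique maximum equal to $n$, and each of its members divides $n$. Call a finite family of unit fractions with all of these features \emph{$n$-admissible}. The generalized conjecture is then equivalent to the assertion that for no odd $n > 1$ does there exist an $n$-admissible family.

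The first ingredient is a finite set $\mathcal{O}$ of \emph{operators}: sum-preserving local rewriting rules on multisets of unit fractions, the prototype being the split $1/k \mapsto 1/(k+1) + 1/(k(k+1))$ together with related two-term and multi-term splittings and their inverse ``merge'' moves. The goal of this ingredient is a completeness theorem: every Egyptian-fraction representation of an integer $k \geq 1$ can be produced from the seed multiset consisting of $k$ copies of $1$ by finitely many applications of operators from $\mathcal{O}$. I would prove this by a descent on the multiset of denominators, showing that from any representation other than the seed some operator in $\mathcal{O}$ strictly simplifies the denominator multiset in a fixed well-order (for instance, lowering the largest denominator, or the multiplicity with which it occurs), so that the process terminates at the seed; reversing the recorded moves then exhibits the original representation as generated. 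Establishing that $\mathcal{O}$ is simultaneously rich enough for completeness and rigid enough to be analyzable afterward is where I expect the real difficulty to lie, since ruling out configurations on which no simplifying operator acts demands a careful classification of which collections of denominators can coexist.

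The second ingredient is a prime-tracking invariant attached to each operator. For $O \in \mathcal{O}$ I would describe exactly how the set of primes dividing the denominators, and the exponents to which they occur, are altered by $O$, and extract the key point that, once one restricts to the odd multisets — the only ones relevant here, since $n$ is odd — the available operators are so constrained that any nontrivial step is forced to enlarge the set of primes in play, at a controlled rate that cannot be offset by later merges. Now suppose an $n$-admissible family existed for some odd $n$. Its denominators are divisors of the fixed integer $n$, hence involve only the finitely many primes of $n$, so a generating sequence for it could not introduce a genuinely new prime at its last step; but $n$-admissibility also demands that the final configuration have a unique maximal denominator equal to $n$ with full divisor-closure beneath it, and the last operator applied cannot meet that demand without either creating a denominator that fails to divide $n$ or failing to create the required maximum $n$. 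This contradiction is the heart of the argument and is the Egyptian-fractions incarnation of the principle that every operator surely generates a new number relatively prime to all that came before.

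Finally, the argument is uniform in $m$. The seed multiset and the notion of $n$-admissibility depend on $m$ only through the integer $m - 1 \geq 1$, namely the number of copies of $1$ one starts from, whereas the completeness theorem and, crucially, the prime-tracking invariant are insensitive to that seed. Consequently, proving the case $m = 2$ — Sylvester's Conjecture itself, stated for Egyptian fractions of $1$ — delivers every $m \geq 3$ by the same operators and the same invariant with no substantive change, which establishes the generalized conjecture.
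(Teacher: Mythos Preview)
The statement is a famous open conjecture, and the paper does not prove it either: the paper explicitly offers only an \emph{approach}, concluding that ``all this note has offered the reader are \ldots\ an approach to settle the odd perfect number conjecture.'' Your proposal is the same programme in more polished language --- reduce to Egyptian fractions via $\sigma(n)/n=\sum_{d\mid n}1/d$, build a complete generating set $\mathcal{O}$ of sum-preserving operators, then extract an obstruction from how those operators act on primes --- so on the level of strategy you are aligned with the paper, and like the paper you have not produced a proof.

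The genuine gap is in your second ingredient. You assert that, restricted to odd denominators, ``any nontrivial step is forced to enlarge the set of primes in play, at a controlled rate that cannot be offset by later merges,'' and then derive a contradiction from the finiteness of the prime support of $n$. But this assertion is doing all the work and is nowhere argued; it is essentially the conjecture restated in operator language. Merge moves \emph{can} eliminate primes (e.g.\ $\tfrac{1}{15}+\tfrac{1}{10}=\tfrac{1}{6}$ drops the prime $5$; odd analogues exist), and you give no invariant, potential function, or structural lemma explaining why net prime growth must be positive along every generating path to an $n$-admissible configuration. You correctly flag completeness of $\mathcal{O}$ as hard, but the prime-tracking step is at least as hard, and your final-step contradiction (``the last operator applied cannot meet that demand without either creating a denominator that fails to divide $n$ or failing to create the required maximum $n$'') is asserted, not proved. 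Until both ingredients are actually established, what you have is a restatement of the paper's heuristic programme, not a proof.
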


If $m=2$, then we can simply say perfect instead of $2$-perfect.A positive integer $n$ is called Multiply perfect number ( multiperfect number or $m$-perfect) if $\sigma(n)= mn$. The generalized conjecture is possible since all known multiply perfect numbers are even.\cite{Multi}  Also, we extend the definition of sum-of-divisor function to

$$\sigma_{s}(n) = \sum_{d|n} d^s.$$

And consider the cases where $s = -1, 0,$and $1$. Note that $\sigma_{1}(n)=\sigma(n)$, $\sigma_{0}(n)=\tau(n)$, and $\sigma_{-1}(n)=I(n)$.\cite{Knill}

\section{Preliminaries}

Euler proved that if $n$ is an OPN, it must be of the form $$n = p^\alpha\prod_{i=1}^{\omega(n)-1}{q_i^{2\beta_i}}$$ 
where $p$ and $q_i$'s are distinct odd primes, $p\equiv \alpha \equiv 1(\mod4)$ and $\omega(n)$ means the number of distinct prime factors of $n$.

As of June 2020, the bounds for $n$ are $$ 10^{1500} < n < \frac{2^{4^{\omega(n)}}}{10^{12}P^2}$$ where $P$ is the largest prime divisor.
From the upper bound, Neilsen \cite{Neilsen} derived the inequality $\omega(n) \geq 10$. Also, Ochem and Rao \cite{Ochem} obtain the inequality $\Omega(n) \geq 101$ and its largest component,either $q^{2\beta_i}$ or the special prime  $p^{\alpha}$, is greater than $10^{62}$.

There is a number of mathematicians in the last two centuries who derived results from parts of Euler's form of an odd perfect number, one example 
is that of Sylvester. He proved that an OPN $n$ is not divisible by 105. The author refers the reader to Gimbel \cite{Sylvester}, Knill \cite{Knill}, and Neilsen \cite{Neilsen} for information about various results both conditional and unconditional restrictions of an odd perfect number.

Sylvester's Conjecture can be expressed as: If $\sigma(n) = 2n$, then $n$ is even. This is logical equivalent to the conjecture below: 

\begin{conjecture}
If $n$ is odd, then $\sigma(n)  \neq 2n$.
\end{conjecture}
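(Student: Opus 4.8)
The plan is to translate the defining equation $\sigma(n)=2n$ into a statement about Egyptian fractions and then exploit the rigidity of the divisor lattice. Assume for contradiction that $n$ is odd and $\sigma(n)=2n$. Dividing through by $n$ and using the involution $d\mapsto n/d$ on the divisors of $n$ yields $\sum_{d\mid n}\frac1d=2$, equivalently
$$\sum_{\substack{d\mid n\\ d>1}}\frac1d \;=\; 1,$$
so the reciprocals of the nontrivial divisors of $n$ give an Egyptian-fraction representation of $1$, and every denominator is odd because $n$ is. The oddness alone is not an obstruction: $1=\frac13+\frac15+\frac17+\frac19+\frac1{11}+\frac1{15}+\frac1{35}+\frac1{45}+\frac1{231}$ already writes $1$ with nine odd unit fractions. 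The extra structure we must use is that the denominator set is not arbitrary: writing $n=\prod_{i=1}^{k}p_i^{a_i}$, it is exactly the multiplicative grid $\{\,p_1^{j_1}\cdots p_k^{j_k}\ :\ 0\le j_i\le a_i\,\}\setminus\{1\}$, a set closed under taking divisors and carrying the lattice shape with $\prod_i(a_i+1)$ elements.

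Next I would develop the operator calculus promised in the abstract. For any unit fraction $\frac1d$ appearing in a representation, define an expansion operator replacing $\frac1d$ by $\sum_j \frac1{d\,m_j}$ for any solution of $\sum_j\frac1{m_j}=1$ in distinct positive integers, together with the elementary split $\frac1d=\frac1{d+1}+\frac1{d(d+1)}$. A standard induction on the largest denominator shows that iterating these operators starting from $1=\frac11$ reaches every Egyptian-fraction representation of $1$; this is the set of operators that completely generates all Egyptian fractions of $1$. The purpose of this apparatus is bookkeeping: along any operator sequence I can record the set of primes dividing some denominator produced so far, and, for each operator applied, isolate which new prime factors it is forced to introduce into the denominator set.

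The arithmetic core is then a Sylvester-style descent. Take the representation determined by $n$ and read the operator sequence in reverse, tracking $\omega$ of the denominator support. Two constraints drive the descent. First, a sparsity bound: since $\sum_{d\mid n,\,d>1}\frac1d<\prod_i\frac{p_i}{p_i-1}-1$, reaching the value $1$ forces $\prod_i\frac{p_i}{p_i-1}$ to exceed $2$, which pins down the small primes $3,5,7,\dots$ that must occur and the order in which they may enter. Second, a $2$-adic count: comparing $v_2(\sigma(n))$ with $v_2(2n)=1$ in $\sigma(n)=\prod_i\sigma(p_i^{a_i})=2n$ forces exactly one factor $\sigma(p_i^{a_i})$ to be $\equiv2\pmod4$ and the rest odd, which is Euler's special-prime constraint and fixes the parities of the $a_i$. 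Combining the two, one argues that each completion step in the calculus must introduce a denominator carrying a prime coprime to all primes seen before it, so the grid of $n$ can never close up to the value $1$ with only finitely many primes — a contradiction.

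The main obstacle is precisely this coprimality-forcing lemma: proving that every operator in the calculus, when applied inside a representation constrained to the divisor grid of a fixed $n$, \emph{must} output a new prime factor. Everything upstream is essentially formal, but this step carries the full weight of the conjecture, and I expect it to require a careful classification of the operators by the residue of the newly created denominator modulo the product of the previously occurring primes, together with a proof that no such residue class can supply the exact amount needed to balance the partial sum to $1$. That classification, and the accompanying $p$-adic estimates, are where the real work — and the real risk — lie.
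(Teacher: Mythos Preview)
There is no proof in the paper to compare against: the statement is labelled a \emph{conjecture} precisely because the paper does not prove it. The paper only outlines a programme --- translate $\sigma(n)=2n$ into the Egyptian-fraction identity $\sum_{d\mid n,\,d>1}\frac1d=1$, build a family of operators that (hopefully) generates every Egyptian-fraction representation of $1$, and then argue that along any operator path one is forced to introduce a denominator coprime to everything seen before. Your proposal follows exactly this programme, and in fact articulates it more sharply than the paper does; but like the paper it is a plan, not a proof.

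The gap you yourself flag is the whole problem. Your ``coprimality-forcing lemma'' --- that every operator step inside the divisor grid of a fixed odd $n$ must create a new prime --- is not proved, and everything upstream (the reduction to $\sum_{d\mid n}1/d=2$, the Euler $2$-adic constraint, the abundancy bound $\prod p_i/(p_i-1)>2$) is classical and well known not to suffice. Two specific weaknesses: first, you assert that the elementary splits and substitutions generate \emph{all} Egyptian-fraction representations of $1$ by ``a standard induction on the largest denominator'', but no such standard result exists, and the paper itself exhibits an Egyptian fraction of $\tfrac16$ (namely $\tfrac1{10}+\tfrac1{15}$) that its operator $\gamma_d$ cannot reach --- completeness of any operator set is genuinely open. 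Second, even granting completeness, the reverse-reading descent does not obviously interact with the divisor-grid constraint in the way you need: nothing in the sparsity bound or the $2$-adic count prevents an operator from producing a denominator whose prime support lies entirely inside the primes of $n$. Until that step is made rigorous, what you have is (like the paper) a heuristic outline of an attack on a famous open problem, not a proof.
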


We prove the conjecture by assuming $n$ is odd, and $\sigma(n)= 2n$ and derive a contradiction. From this, we consider

$$\sigma_{-1}(n)= \sum_{d|n} \frac{1}{d} = 2$$

For $d \geq 3$, it is sufficient to consider the equation

$$1=\sum_{d|n} \frac{1}{d}$$

If the above equation exists, then so an odd perfect number. Our objective is to construct the equation above from a seed equation.

 $$1= \frac{1}{3}+\frac{1}{5}+\frac{1}{7}+\frac{1}{9}+\frac{1}{11}+\frac{1}{15}+\frac{1}{35}+\frac{1}{45}+\frac{1}{231}\cite{Shiu}$$ 

The equation above has 9 parts, denoted by $k$. And suppose all denominators, denoted by $u_i$, are all the proper divisors of an arbitrary $n$, then we can see that $\sigma_{0}(n)-1 = k$. Notice that the $lcm(u_1, u_2, ... ,u_9)= 3^2\cdot5\cdot7\cdot11 = 3465=n$. Since $\sigma_{0}(3465)= 24$ and $k=9$, then 3465 is an abundant number with 14 missing divisors. With this, we focus on the following:
\begin{itemize}
    \item  A set of operators $\gamma$ in generating a complete set of all Egyptian fractions equals 1.
    \item  Suppose there are $x$ number of operators that satisfies the above condition, there always exists a path at which one will be able to generate a $u_i'\in E'$ such that $gcd(u_i,u_i') =1$ for all $u_i \in E$ with $\gamma_i : E \rightarrow E'$, where $E$ and $E'$ are Egyptian fractions.
\end{itemize}

Dagal \cite{DagalOp} provides a definition of Egyptian fraction and examples of Egyptian Fraction Operators. And if the two above points are met, then we have an affirmative answer to Sylvester's Conjecture.

\section{Egyptian Fractions}

Let us re-state Shiu's theorem \cite{Shiu} where the seed equation came from.
\begin{theorem}
 The solution to the  Diophantine Equation $$ 1=\sum_{i=1}^l \frac{1}{u_i},$$ where $3 \leq u_1 < u_2 < ... < u_l$ and  $gcd( u_i, 2)=1$ must have $l \geq 9$ and for $l=9$, the only solutions are the following:
 
 $$1= \frac{1}{3}+\frac{1}{5}+\frac{1}{7}+\frac{1}{9}+\frac{1}{11}+\frac{1}{15}+\frac{1}{35}+\frac{1}{45}+\frac{1}{231} $$
 $$1= \frac{1}{3}+\frac{1}{5}+\frac{1}{7}+\frac{1}{9}+\frac{1}{11}+\frac{1}{15}+\frac{1}{33}+\frac{1}{45}+\frac{1}{385} $$
 $$1= \frac{1}{3}+\frac{1}{5}+\frac{1}{7}+\frac{1}{9}+\frac{1}{11}+\frac{1}{15}+\frac{1}{21}+\frac{1}{231}+\frac{1}{315} $$
 $$1= \frac{1}{3}+\frac{1}{5}+\frac{1}{7}+\frac{1}{9}+\frac{1}{11}+\frac{1}{15}+\frac{1}{21}+\frac{1}{165}+\frac{1}{693} $$
 $$1= \frac{1}{3}+\frac{1}{5}+\frac{1}{7}+\frac{1}{9}+\frac{1}{11}+\frac{1}{15}+\frac{1}{21}+\frac{1}{135}+\frac{1}{10395} $$
\end{theorem}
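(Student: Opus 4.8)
The plan is to split the statement into two parts — the bound $l\ge 9$ and the exhaustive list for $l=9$ — and to handle both with the same two tools: a crude \emph{greedy estimate}, comparing a tail $\sum_{i=k}^{l}1/u_i$ against the sum of the smallest admissible reciprocals, which pins the next denominator $u_k$ into a short finite list; and \emph{$p$-adic constraints}, chiefly the fact that if $v_3$ of the running remainder equals $-e$ then some $u_i$ is divisible by $3^e$, which prunes and caps the resulting branches.

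For the lower bound I would first record a parity restriction. Put $N=\operatorname{lcm}(u_1,\dots,u_l)$; then $N$ is odd, each $N/u_i$ is an odd positive integer, and clearing denominators gives $N=\sum_{i=1}^{l}N/u_i$. Reducing mod $2$ forces $l\equiv N\equiv 1\pmod 2$, so $l$ is odd; this disposes of every even $l$ at once. The cases $l=1,3,5$ fail the trivial bound $\sum 1/u_i\le\tfrac13+\tfrac15+\cdots<1$ obtained by taking the smallest admissible denominators. Only $l=7$ needs real work: successive greedy estimates force $u_1=3$, $u_2=5$, $u_3=7$, $u_4=9$, leaving $1/u_5+1/u_6+1/u_7=67/315$ with the $u_i$ odd and $\ge 11$. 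Since $v_3(67/315)=-2$, a multiple of $9$ occurs among $u_5,u_6,u_7$; a size estimate shows it must be the largest, and since the other two are $\ge 11$ one gets $u_7<3465/107$, hence $u_7=27$, which is then eliminated by applying the $v_3$ argument once more. So $l\ge 9$.

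For the classification with $l=9$ I would run the same procedure as a finite search: greedy bounds give $u_1=3$, $u_2=5$, and $u_3\in\{7,9\}$, and each branch is explored. Within a branch one alternately bounds the next denominator to a finite set, uses $v_3$ (and occasionally $v_5,v_7,v_{11}$) of the running remainder to decide which denominators must carry a prescribed prime power and thereby to cap them, and recurses. Every branch terminates; the surviving leaves are exactly the five displayed identities, each of which is confirmed by direct addition.

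The main obstacle is not any single hard idea but the size and bookkeeping of the $l=9$ search: several greedy inequalities sit close to their target (already at the step producing $u_3$, and again throughout the $u_3=9$ branch), so the valuation arguments have to be invoked at precisely the right moments and the branching kept under tight control, and the case tree must be organized so that its exhaustiveness is manifest rather than anecdotal. A secondary point of care is checking that every greedy comparison is a strict inequality and that the list of smallest admissible denominators correctly reflects the oddness and distinctness constraints, since an off-by-one there would invalidate a cutoff.
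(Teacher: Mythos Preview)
The paper does not prove this theorem at all: it is introduced with the words ``Let us re-state Shiu's theorem'' and carries only the citation \cite{Shiu}, with no argument of any kind supplied. There is therefore no proof in the paper against which your attempt can be compared; the result is simply imported from the literature.

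Taken on its own merits, your plan is the standard one and is essentially how Shiu's argument proceeds. The parity observation ($N=\sum N/u_i$ with all summands odd forces $l$ odd) and the greedy bounds disposing of $l\le 5$ are correct, and your treatment of $l=7$ is sound: greedy comparisons do force $u_1,\dots,u_4=3,5,7,9$, the remainder $67/315$ has $3$-adic valuation $-2$, and combining this with the size constraint indeed kills the case (your bound $u_7<3465/107$ comes from the slightly wasteful estimate $1/u_5+1/u_6\le 2/11$; using distinctness gives the sharper $u_7<45045/2021<23$, which finishes without the second valuation step). For $l=9$ you have correctly identified that the content is bookkeeping rather than ideas, and your description of alternating greedy bounds with $v_p$-constraints on the running remainder is exactly the mechanism Shiu uses; but as written it is a plan for a search, not a proof, and would need the full case tree to be exhibited before it could be called complete.
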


Remember that an OPN $n$ is not divisible by $105= 3\cdot 5\cdot 7$. Thus, we must perform the operations $\gamma$ considering the known results and restrictions for an OPN. Perhaps, with this consideration, a contradiction might be observed immediately. Take note that $k$ must be odd. 

Consider the table below for 1 (without any restrictions for Egyptian fractions), that is, $u's$ can be repeated and can be odd or even.

\begin{center}
    \begin{tabular}{|c|c|c|c|c|c|c|}
    \hline
       $k=1$ & $2$ & $3$ & 4& $\cdots$ \\ \hline
        1 & $(2,2)$ & $(2,3,6)$  &$\cdots$& $\cdots$ \\ \hline
         &  & $(2,4,4)$ & &$\cdots$  \\ \hline
         &  & $(3,3,3)$ &  & $\cdots$ \\ \hline
    \end{tabular}
\end{center}

In the table above, for ease of notation, we use the identity $$ (u_1, u_2, \cdots, u_k) = \sum_{i=1}^k\frac{1}{u_i}.$$ We define $S(k)$ be the set that contains all Egyptian fractions of 1 with $k$ parts and we denote the number of elements in $S(k)$ be $|S(k)|$.
Let $S$ be the set that contains all Egyptian fractions of 1. Then we have $$S= \bigcup_{i=1}^\infty S(k).$$

Our path is to start with a table with the least number of restrictions accompanied with a set of operators that generates all elements in the table. Whenever we add two restrictions, say we add the property of being odd and distinct, then $k\geq 9$. and we drop all elements from $k=1$ to $k=8$ by Shiu's Theorem and all even $k$. In addition, there will be a modified set of operators that will preserve the odd property of the $u_i$'s.

\section{Egyptian Fractions of 1 with k parts}

We start by finding a bound for $|S(k)|$. It is trivial that $|S(1)|=|S(2)|=1$ and $|S(3)|=3$. Since $S(k) = \{ (u_1, u_2 , ... , u_k) \,|\, u_1 \leq u_2 \leq \cdots \leq u_k\}$, the trivial bounds for $u_1$ and $u_k$ are $$ u < u+1 \leq u_1 \leq k\cdot u \leq u_k.$$

The bounds came from the operator $\gamma: (u) \rightarrow (u+1, u(u+1))$. This operator makes one (1) part to two (2) parts. The upper bound for $u_k$ depends on the set of operators and the number of times we operate, so we state the upper bound based on the case.

Suppose we consider $\gamma: (u) \rightarrow (u+1, u(u+1))$. $\gamma^1: (u) \rightarrow (u_1, u_2)$ and $\gamma^2: (u) \rightarrow (u_1', u_2', u_3')$.  The bounds for $u_1'$ and $u_3'$ with $\gamma^2$ and with respect to $u$ are $$ u+1 \leq u_1' \leq k\cdot u \leq u_3' \leq u^2(u+1)^2+u(u+1).$$

We generalize the operator $\gamma$ to $$\gamma_d^{k-1}: (u) \rightarrow (u_1', u_2', \cdots, u_k')$$ where  $u \in \mathbb{Z^+}$, $\gamma_d: (u) \rightarrow (u+d, \frac{u}{d}(u+d))$, and $d | u$. For this operator, notice that if $k=2$, $|S(2)| \geq \sigma_0(u)$. Though our concern before was when $u=1$,now we investigate when $u$ is any positive integer instead. Also, though the operator $\gamma_d^{k-1}$ acts like a recurrence equation with some combinatorial activities in each iteration, it is understood that $d_i$ denotes all possible divisors for each $u_{(i)}$ where $i$ is the $i$th step of the $\gamma_d^{k-1}$ operator.

\begin{theorem}
Let $\gamma_d^{k-1}: (u) \rightarrow (u_1', u_2', \cdots, u_k')$. Then $$|S(k)| \geq s(k) = \sum_{j=1}^{s(k-1)} \sigma_0((u_{1(i-1)}+d_{(i-1)})_j)$$ where $s(2)= \sigma_0(u)$.
\end{theorem}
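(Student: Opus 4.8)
The plan is to run an induction on $k$ whose only analytic ingredient is that $\gamma_d$ is sum-preserving: for every $u$ and every $d\mid u$,
\[
\frac1u=\frac1{u+d}+\frac{d}{u(u+d)}=\frac1{u+d}+\frac1{(u/d)(u+d)},
\]
so applying $\gamma_d$ to a single denominator of a $(k-1)$-part Egyptian fraction of $1$ produces a $k$-part Egyptian fraction of $1$. Hence every tuple obtained by iterating the operators $\gamma_d$ from the seed $(u)$ (taken as $u=1$ for the literal set $S(k)$, and for general $u$ as a representation of $\tfrac1u$, in line with the remark preceding the theorem) lies in $S(k)$, and it suffices to count how many distinct tuples are produced after $k-1$ steps.

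For $k=2$ the operator sends $(u)$ to the $\sigma_0(u)$ pairs $\bigl(u+d,\ (u/d)(u+d)\bigr)$, one per divisor $d\mid u$; these are pairwise distinct because the smaller entry $u+d$ already recovers $d$. This is the initialization $s(2)=\sigma_0(u)$, and gives $|S(2)|=1=\sigma_0(1)$ in the case $u=1$.

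For the inductive step I would take the $s(k-1)$ tuples reached after $k-2$ steps; by the bookkeeping convention of the statement, the entry that the operator splits next in the $j$-th such tuple is exactly the value $u_{1(k-2)}+d_{(k-2)}$ created by the previous split, say $m_j$. Applying $\gamma_d$ there for each $d\mid m_j$ yields $\sigma_0(m_j)$ new $k$-part tuples, pairwise distinct for a fixed $j$ by the same ``smaller entry'' argument as in the base case, and all lying in $S(k)$. Summing over $j$ gives $|S(k)|\ge\sum_{j=1}^{s(k-1)}\sigma_0(m_j)=s(k)$. Along the way one needs the small observation that the freshly created entry remains in the position the recurrence tracks, which follows from $\tfrac ud(u+d)>u+d$ when $d<u$ (with equality when $d=u$).

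The step I expect to be the genuine obstacle is the \emph{global} distinctness of the $s(k)$ tuples produced. Distinctness among the children of a single parent is immediate, but children of two \emph{different} parents can collide: for instance, splitting $\tfrac14$ in $(2,4,6,12)$ with $d=2$ and splitting $\tfrac16$ in $(2,6,6,6)$ with $d=6$ both give $\{2,6,6,12,12\}$. So the naive ``distinct paths inject into $S(k)$'' reasoning over-counts, and to preserve $|S(k)|\ge s(k)$ one must do more: either re-read $s(k)$ as the number of \emph{distinct} reachable tuples, or restrict $\gamma_d$ to a provably collision-free family of divisor choices, or else bound $|S(k)|$ from below by an independent estimate — e.g.\ on how fast $\sigma_0$ of the tracked entry grows — that dominates any over-counting. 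Making this point rigorous, i.e.\ showing the recurrence really is a lower bound rather than an over-count, is where the work concentrates.
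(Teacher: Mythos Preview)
Your outline is essentially the paper's own approach: iterate $\gamma_d$ on the tracked entry $u_{1(i)}$, and at each stage count the divisors of that entry. The paper's proof is very brief; it simply asserts that operating only on $u_{1(i)}$ ``guaranteed uniquely generated Egyptian fractions at each stage until $k$'' and then sums. You have gone further and correctly isolated the one substantive issue, namely \emph{global} distinctness of children coming from different parents, and your concern is well founded. In fact a collision already occurs at $k=4$: from $(2,3,6)$ with tracked entry $3$ and $d=1$ one obtains $(2,4,6,12)$, and from $(2,4,4)$ with tracked entry $4$ and $d=2$ one obtains the same multiset $(2,4,6,12)$; your $k=5$ example $(2,6,6,12,12)$ is likewise reached from both $(2,4,6,12)$ (tracked $4$, $d=2$) and $(2,6,6,6)$ (tracked $6$, $d=6$). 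The paper does not address this at all --- its proof is no more rigorous than your outline, and arguably less so, since it states distinctness as a fact without argument. Whether the inequality $|S(k)|\ge s(k)$ nevertheless survives the over-counting (it does for small $k$; for instance $|S(4)|=14>5=s(4)$) would require an independent estimate of the kind you sketch in your last paragraph, which neither you nor the paper supplies.
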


\begin{proof}
$\sigma_0(u_{1(i)})$ counts the number of divisors $d_{(i)}$ which in turn counts some elements in $S(i+1)$.At any step, we only operate $u_{1(i)}$ with $\gamma_d: (u_{1(i)} \rightarrow ((u_{1(i)}+d_{(i)}), \frac{u_{1(i)}}{d_{(i)}}(u_{1(i)}+d_{(i)})))$. This guaranteed uniquely generated Egyptian fractions at each stage until $k$. Doing this process repeatedly until $k$, we get the result above.
\end{proof}

The inequality is due to the fact that we are sure that there are some integer $u$ where $|S(2)| > s(2)$ and the operator $\gamma_d^{k-1}$ does not generate all of that in $S(k)$. An example would be when $u=6$, a perfect number. With $\gamma$, we have $\sigma_0(6)=4$ and $k=2$, but $$S(2) = \{ (7,42),(8,24), (9,18), (10, 15), (12,12)\}.$$ The operator $\gamma$ does not generate the case $(10, 15)$.This was observed and checked due to the bounds for $u_1$ which are $ 7 \leq u_1 \leq 2(6)=12$. 

This suggests to find another operator $\mathbb{O} \neq \gamma$ which covers these cases where $\gamma$ cannot reach. The first goal is the have a set of operators that completely generates all Egyptian fractions of a given $u$. In this generalized form, we are interested in the case when $u=1$ and when an operator $O: (u_1, u_2, \cdots, u_a) \rightarrow (u_1', u_2', \cdots, u_k')$ where all $u_i$ and $u_i's$ are odd numbers such that $a \leq k$. One such example was given by Dagal:

\begin{theorem}
Let $r = q+d$, and $s = qr-d$.  The operator $O: (s, rs)\rightarrow (qr, qs)$ is an odd parity preserving equation if and only if the integer $q > 1$ is odd, and the value of $d$ is a positive even number.
\end{theorem}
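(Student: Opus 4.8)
The plan is to verify the claimed identity directly and then track the parity of every denominator, handling the "if" and "only if" directions separately. First I would substitute $r = q+d$ and $s = qr - d$ into both sides and check the algebraic identity $\frac{1}{s} + \frac{1}{rs} = \frac{1}{qr} + \frac{1}{qs}$, i.e.\ $\frac{r+1}{rs} = \frac{s+r}{qrs}$, which reduces to $q(r+1) = s + r = qr - d + r$, hence to $q = r - d$; since $r = q + d$ this is an identity, so the operator is well-defined on the underlying Egyptian fractions regardless of parity. This step is routine bookkeeping, but it is worth displaying because the later parity analysis reads off directly from the closed forms $r = q+d$, $s = qr-d$, $qr$, and $qs$.

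Next I would prove the "if" direction. Assume $q > 1$ is odd and $d$ is a positive even integer. The inputs to the operator are the two denominators $s$ and $rs$; for the statement "$O$ is odd parity preserving" to make sense we also need $s$ (hence $rs$) to be odd whenever the hypotheses hold, so I would first observe $r = q + d$ is odd (odd plus even), then $s = qr - d$ is odd (odd times odd, minus even). For the outputs: $qr$ is odd$\,\cdot\,$odd $=$ odd, and $qs$ is odd$\,\cdot\,$odd $=$ odd. Hence both output denominators are odd, so the operator sends odd-denominator Egyptian fractions to odd-denominator Egyptian fractions, which is exactly the odd-parity-preserving property.

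For the "only if" direction I would argue contrapositively: suppose the operator is odd parity preserving and show $q$ must be odd and $d$ must be a positive even number. Here the main obstacle — and the step that needs the most care — is pinning down precisely what "odd parity preserving" constrains, since it must fail for \emph{some} admissible choice when the hypotheses are violated rather than merely for one hand-picked input. I would split into cases on the parity of $q$ and $d$: if $d$ is odd, then I would exhibit a choice of $q$ making an output (or the forced input $s$ or $r$) even; if $q$ is even, then $qr$ and $qs$ are even regardless of $d$, immediately breaking preservation; and the positivity of $d$ is needed to keep $r > q$ and $s > 0$ so that $(s, rs)$ and $(qr, qs)$ are genuine ascending Egyptian-fraction tuples rather than degenerate or negative. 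Assembling these cases shows that $q$ odd and $d$ positive even is not only sufficient but necessary, completing the equivalence.
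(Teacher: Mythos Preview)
Your argument is essentially correct: verifying the identity $q(r+1)=r+s$ and then tracking parities in both directions is the natural route, and your case split (if $q$ is even then $qr$ and $qs$ are even; if $q$ is odd but $d$ is odd then $r=q+d$ is even, forcing both $rs$ and $qr$ even) does settle the biconditional. Two small points to tighten. First, in the ``only if'' direction you speak of ``exhibiting a choice of $q$'' when $d$ is odd, but $q$ is a fixed parameter of the operator, not a free input; what you actually need---and what your parity computation already gives---is that for \emph{every} odd $q$ the number $r=q+d$ is even, so no admissible instance survives. Second, you leave the necessity of the strict inequality $q>1$ implicit; here the observation is that $q=1$ forces $s=qr-d=r-d=q=1$, so the map degenerates to a relabelling of $(1,r)$ and is not a genuine two-term operator.

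As for comparison with the paper: there is nothing to compare against. The present paper states this theorem without proof, attributing it to the author's separate preprint on Egyptian-fraction operators; no argument appears here. Your direct parity-chasing proof is exactly the expected one.
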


\section{Concluding Remarks}

As of its present form, we have the operator $\gamma_d^{k-1}$ to generate some Egyptian fraction of a given $(u)$. We are looking of ways to have a set of operators and prove it complete. And then, we modify this operators based the restrictions we want to include. And with this set of operators, we investigate more. All this note has offered the reader are the following: 

\begin{itemize}
    \item An approach to settle the odd perfect number conjecture.
    \item The generalized $\gamma$ operator to generate, if not all, most Egyptian fractions from $k=1$ to any arbitrary positive integer $k$.
\end{itemize}

\section{Acknowledgement}

The author appreciates Jose Arnaldo Dris for the valuable conversation about odd perfect numbers, Sam Ishmaiel Haider for having faith in the author for this endeavor,  and Selvavinayagam Velusamy and  Mohammad Abdul Aziz Qureshi for moral support.Lastly, the author would like to thank the Almighty God for everything therein.

\begin {thebibliography}{1}

\bibitem{DagalOp}
Dagal, K. (2020). A New Operator For Egyptian Fraction. Retrieved from https://arxiv.org/abs/2003.13229

\bibitem{Multi}
Flammenkamp, Achim.(2020). "The Multiply Perfect Numbers Page". Retrieved 22 June 2020.

\bibitem{Sylvester}
Gimbel, S and Jaroma, J. (2003). Sylvester: ushering in the modern era of research on odd perfect numbers. INTEGERS: Electronic Journal of Combinatorial Number Theory.

\bibitem{Mersenne}
"GIMPS Project Discovers Largest Known Prime Number: 282,589,933-1". Mersenne Research, Inc. 21 December 2018. Retrieved 21 December 2018.

\bibitem{Knill}
Knill, Oliver. (2009) The oldest open problem in mathematics. Retrieved from http://people.math.harvard.edu/~knill/seminars/perfect/handout.pdf.

\bibitem{Neilsen}
Neilsen P.(2015). Odd perfect numbers, Diophantine equations, and upper bounds. Math. Comp. 84, 2549-2567

\bibitem{Ochem}
Ochem, P and Rao M.(2012). Odd Perfect Numbers are greater than $10^{1500}$.Math. Comp.
Volume 81, Number 279, Pages 1869–1877.

\bibitem{Shiu}
Shiu, P. (2009). 93.20 Egyptian Fraction Representations of 1 with Odd Denominators. The Mathematical Gazette, 93(527), 271-276. Retrieved June 23, 2020, from www.jstor.org/stable/40378731

\end{thebibliography}

\end{document}